\newtheorem{theorem}{Theorem}[section]
\newtheorem{proposition}[theorem]{Proposition}
\newtheorem{claim}[theorem]{Claim}
\newtheorem*{mth}{Main Theorem}
\theoremstyle{remark}
\theoremstyle{definition}
\numberwithin{equation}{section}
\newcommand{\R}{\mathbb{R}}
\newcommand{\N}{\mathbb{N}}
\newcommand{\e}{\varepsilon}
\newcommand{\p}{\varphi}
\newcommand{\n}{\left\Vert\cdot\right\Vert}
\newcommand{\nn}[1]{{\left\vert\kern-0.25ex\left\vert\kern-0.25ex\left\vert #1 \right\vert\kern-0.25ex \right\vert\kern-0.25ex \right\vert}}
\newcommand{\ceil}[1]{\left\lceil #1 \right\rceil}
\newcommand{\cut}{\mathord{\upharpoonright}}
\renewcommand{\leq}{\leqslant}
\renewcommand{\geq}{\geqslant}
\DeclareMathOperator{\supp}{supp}
\newcounter{smallromans}
\newcommand{\X}{\mathcal{X}}
\newcommand{\Y}{\mathcal{Y}}
\newcommand{\Z}{\mathcal{Z}}
\newcommand{\U}{\mathcal{U}}
\renewcommand\qedsymbol{$\blacksquare$} 
\begin{document}
\title[Smooth norms in dense subspaces of $\ell_p(\Gamma)$ and operator ranges]{Smooth norms in dense subspaces of $\ell_p(\Gamma)$\\and operator ranges}

\author[S.~Dantas]{Sheldon Dantas}
\address[S.~Dantas]{Departament de Matem\`atiques and Institut Universitari de Matem\`atiques i Aplicacions de Castell\'o (IMAC), Universitat Jaume I, Campus del Riu Sec. s/n, 12071 Castell\'o, Spain \newline
\href{https://orcid.org/0000-0001-8117-3760}{ORCID: \texttt{0000-0001-8117-3760}}}
\email{\texttt{dantas@uji.es}}

\author[P.~H\'ajek]{Petr H\'ajek}
\address[P.~H\'ajek]{Department of Mathematics\\Faculty of Electrical Engineering\\Czech Technical University in Prague\\Technick\'a 2, 166 27 Prague 6\\ Czech Republic}
\email{hajek@math.cas.cz}

\author[T.~Russo]{Tommaso Russo}
\address[T.~Russo]{Institute of Mathematics\\ Czech Academy of Sciences\\ \v{Z}itn\'a 25, 115 67 Prague 1\\ Czech Republic\\ and Department of Mathematics\\Faculty of Electrical Engineering\\Czech Technical University in Prague\\Technick\'a 2, 166 27 Prague 6\\ Czech Republic \newline
\href{https://orcid.org/0000-0003-3940-2771}{ORCID: \texttt{0000-0003-3940-2771}}}
\email{russo@math.cas.cz, russotom@fel.cvut.cz}

\thanks{S.~Dantas was supported by the Spanish AEI Project PID2019 - 106529GB - I00 / AEI / 10.13039/501100011033 and the funding received from the Universitat Jaume I through its Research Stay Grants (E-2022-04). P.~H\'ajek was supported in part by OPVVV CAAS CZ.02.1.01/0.0/0.0/16$\_$019/0000778. T.~Russo was supported by the GA\v{C}R project 20-22230L; RVO: 67985840, and by Gruppo Nazionale per l'Analisi Matematica, la Probabilit\`a e le loro Applicazioni (GNAMPA) of Istituto Nazionale di Alta Matematica (INdAM), Italy.}

\date{\today}
\keywords{Smooth norm, local dependence on finitely many coordinates, long sequence spaces, lineability, operator range, Implicit Function Theorem.}
\subjclass[2020]{46B03, 46B20 (primary), and 46B45, 46B26, 47J07, 46T20 (secondary)}

\begin{abstract} For $1\leq p<\infty$, we prove that the dense subspace $\Y_p$ of $\ell_p(\Gamma)$ comprising all elements $y$ such that $y \in \ell_q(\Gamma)$ for some $q \in (0,p)$ admits a $C^{\infty}$-smooth norm which locally depends on finitely many coordinates. Moreover, such a norm can be chosen as to approximate the $\n_p$-norm. This provides examples of dense subspaces of $\ell_p(\Gamma)$ with a smooth norm which have the maximal possible linear dimension and are not obtained as the linear span of a biorthogonal system. Moreover, when $p>1$ or $\Gamma$ is countable, such subspaces additionally contain dense operator ranges; on the other hand, no non-separable operator range in $\ell_1(\Gamma)$ admits a $C^1$-smooth norm.
\end{abstract}
\maketitle

\section{Introduction}
The present paper is a continuation of the research of the authors \cite{DHR, DHR1, HR}, dedicated to the study of smoothness in (incomplete) normed spaces. The main question that we face in this ongoing project is the following: given a Banach space $\X$ and $k\in\N \cup\{\infty,\omega\}$ is there a dense subspace $\Y$ of $\X$ such that $\Y$ admits a $C^k$-smooth norm? (By definition, $C^\omega$-smooth means analytic; however, the case $k=\omega$ will not be considered in this article.) Such a line of research can be traced back at least to the papers \cite{H locally, V} from the early Nineties, where it is proved that every separable Banach space admits a dense subspace with a $C^\infty$-smooth norm. In particular, for a separable normed space $\X$ the existence of a $C^1$-smooth norm does not imply that $\X^*$ is separable, a result that is possibly surprising at first sight. Our goal in \cite{DHR, DHR1} was to push such a theory to the non-separable context and it culminated in the main result of \cite{DHR1} asserting that every Banach space with a fundamental biorthogonal system has a dense subspace with a $C^\infty$-smooth norm (let us also refer to the same paper for a more thorough introduction to the subject).

As it turns out, in most of the above results, the dense subspace $\Y$ of $\X$ is the linear span of a certain biorthogonal system in $\X$ (the unique exception being \cite[Theorem 3.1]{DHR} where an analytic norm is constructed in the dense subspace $\ell_\infty^F={\rm span}\{\bm{1}_A\colon A\subseteq \N\}$ of $\ell_\infty$). In particular, when $\X$ is separable, the subspace $\Y$ has countable dimension (namely, it is the linear span of a countable set). In this paper we focus on the classical (long) sequence spaces and we show that it is possible to go beyond this limitation; in particular, we build $C^\infty$-smooth norms on dense subspaces that are `large' in a sense that we specify below. More precisely, the following is our main result (for the necessary notation, we refer the reader to Subsection \ref{subsect:notation} below).

\begin{mth} Let $1\leq p <\infty$ and $\Gamma$ be any infinite set. Then 
\begin{equation*}
    \Y_p\coloneqq \Big\{y\in \ell_p(\Gamma)\colon \|y\|_q < \infty \text{ for some } q\in(0,p) \Big\}= \bigcup_{0<q<p} \ell_q(\Gamma)
\end{equation*}
is a dense subspace of $\ell_p(\Gamma)$ which admits a $C^{\infty}$-smooth and LFC norm that approximates the $\n_p$-norm.
\end{mth}

Plainly, when $p$ is an even integer, the canonical norm of $\ell_p(\Gamma)$ is $C^\infty$-smooth, hence the $C^\infty$-smoothness part of the theorem is obvious. On the other hand, if $p\notin 2\N$, $\ell_p(\Gamma)$ does not have any $C^{\ceil p}$-smooth norm ($\ceil p$ denotes the ceiling of $p$) \cite[p.~295]{HJ}. Moreover, no $\ell_p(\Gamma)$ has an LFC norm \cite{PWZ}, so in order to obtain an LFC norm in the main theorem it is indeed necessary to pass to the subspace $\Y_p$. Finally, recall that $c_0(\Gamma)$ has a $C^\infty$-smooth and LFC norm \cite[p.~284]{HJ}, for which reason we do not consider $c_0(\Gamma)$ in our result. \smallskip

Let us now discuss the novelty of the result. First of all, for every $1\leq p<\infty$, the dense subspace $\Y_p$ of $\ell_p(\Gamma)$ has the same linear dimension of $\ell_p(\Gamma)$, hence it is as large as possible in the linear sense. In particular, in the case of $\ell_p$, \emph{i.e.}, when $\Gamma$ is countable, we obtain a dense subspace of dimension continuum. This is in sharp contrast with the results in \cite{DHR, DHR1, H locally, V}, where the dense subspaces had dimension equal to the density character of the Banach space $\X$. Comparing this result with \cite{H locally}, it seems conceivable to conjecture that for every separable Banach space $\X$ there is a dense subspace $\Y$ of dimension continuum and with a $C^\infty$-smooth norm. We leave the validity of such a conjecture as an open problem.

Moreover, if $\nn\cdot$ is a norm on $\ell_p(\Gamma)$ that coincides with the $C^\infty$-smooth and LFC one on $\Y_p$, then it is standard to verify (see, \emph{e.g.}, the proof of \cite[Corollary 3.5]{DHR1}) that the norm $\nn\cdot$ is $C^\infty$-smooth and LFC at every point of $\Y_p$ (as a function on $\ell_p(\Gamma)$). In lineability terms, this assertion can be restated as stating that the set of points in $\ell_p(\Gamma)$ where the norm $\nn\cdot$ is $C^\infty$-smooth and LFC is maximal densely lineable in $\ell_p(\Gamma)$. Let us refer to \cite{ABPS, BPS} and the references therein for information on lineability in Banach spaces and for the relevant definitions.

Finally, there is a second sense in which the subspace $\Y_p$ can be considered to be `large', which is connected to the notion of operator ranges, \cite{OpRange1, OpRange2, OpRange3}. Recall that a normed space $\Y$ is an \emph{operator range} if there are a Banach space $\Z$ and a surjective bounded linear operator $T\colon \Z\to \Y$; in other words, $\Y$ is the linear image of a Banach space. Notice that operator ranges bear a certain form of completeness, since, for instance, they satisfy the Baire Category Theorem, even if in a finer linear topology. When $p>1$, it is clear that $\Y_p$ contains a dense operator range, since the Banach space $\ell_1(\Gamma)$ injects in $\Y_p$. Hence, when $p>1$, our theorem also implies the existence of a dense operator range in $\ell_p(\Gamma)$ that admits a $C^\infty$-smooth and LFC norm. On the other hand, the situation is different when $p=1$ (and $\Gamma$ is uncountable): indeed, it is a folklore result, essentially due to Rosenthal \cite{Rosenthal}, that every non-separable operator range in $\ell_1(\Gamma)$ contains an isomorphic copy of $\ell_1(\omega_1)$, hence it admits no $C^1$-smooth norm (see Proposition \ref{Proposition:l1-omega1}). This result is extremely relevant to the topic of the paper, since it is one of the few instances where an incomplete normed space is proved not to admit any $C^1$-smooth norm. Moreover it is also the first occurrence where $\ell_1(\Gamma)$ behaves worse than $\ell_p(\Gamma)$ ($1<p<\infty$) for what concerns the existence of smooth norms on dense subspaces. Such behaviour was to be expected, but it was not present in the literature so far; quite surprisingly, there are in fact instances of the opposite situation (see the discussion concerning Theorem A(iii) in \cite{DHR}).

\subsection{Definitions and notation} \label{subsect:notation}
All the spaces that we consider in the paper are real normed spaces. If $\X$ is a normed space, then the norm $\n$ of $\X$ is said to be \emph{$C^k$-smooth} if its $k$-th Fr\'echet derivative exists and it is continuous at every point of $\X \setminus \{0\}$. When this holds for every $k \in \N$, the norm is \emph{$C^\infty$-smooth}. The norm $\n$ \emph{locally depends on finitely many coordinates} (is LFC, for short) on $\X$ if for each $x\in \X\setminus \{0\}$ there exist an open neighbourhood $\U$ of $x$, functionals $\p_1,\dots,\p_k \in\X^*$, and a $C^\infty$-smooth function $G\colon \R^k\to\R$ such that
\begin{equation*}
    \|y\|= G\big( \langle\p_1,y\rangle,\dots, \langle\p_k,y\rangle \big) \qquad \text{for every } y\in\U.
\end{equation*}
We say that a norm $\n$ on $\X$ \emph{can be approximated} by norms with a certain property $P$ if, for every $\e>0$, there is an equivalent norm $\nn\cdot$ on $\X$ with property $P$ and such that $(1-\e)\nn\cdot\leq \n\leq (1+\e)\nn\cdot$. For further information concerning smoothness we refer to the monographs \cite{DGZ, HJ}.

If $x\colon\Gamma\to \R$ and $p\in(0,\infty)$, we write as usual
\begin{equation*}
    \|x\|_p\coloneqq \left( \sum_{\gamma\in \Gamma} |x(\gamma)|^p\right)^{1/p} \qquad \text{and}\qquad \ell_p(\Gamma)\coloneqq\Big\{x\colon\Gamma \to\R\colon \|x\|_p<\infty \Big\}.
\end{equation*}
When $p\geq1$, $\ell_p(\Gamma)$ is obviously a Banach space with the norm $\n_p$, while for $p\in(0,1)$ it is only a quasi-Banach space and $\n_p$ is a quasi-norm. When $0<q<p<\infty$, we sometimes write $\ell_q(\Gamma) \subseteq \ell_p(\Gamma)$, by which we mean the inclusion of the corresponding underlying vector spaces (which is also a continuous injection of (quasi-)Banach spaces).

We write $\N$ for the set of positive natural numbers and $\N_0\coloneqq \N\cup\{0\}$. We write $\omega_1$ for the smallest uncountable ordinal. We denote by $|A|$ the cardinality of a set $A$. Given a set $\Gamma$, we use the standard set-theoretic notation $[\Gamma]^{<\omega} \coloneqq \{A\subseteq \Gamma\colon |A|<\infty\}$.

\section{Proof of the Main Theorem}
In this section we provide the proof of our main theorem. The argument is inspired by the proof of \cite[Theorem 4.1]{DHR}; in particular, the formula for the norm and the scheme of the argument are essentially the same. Nevertheless, the similarity between the two proofs is more formal than substantial due to the crucial part of the proof (Claim \ref{Claim:jump-ngh} below, that corresponds to \cite[Claim 4.2]{DHR}) which is rather different in the two papers: indeed, in the present paper we require some new ingredients, including combinatorial ones and finer estimates. 

In what follows, we occasionally write $a_k \sim b_k$ to mean that the scalar sequences $(a_k)_{k=1}^\infty$ and $(b_k)_{k=1}^\infty$ are asymptotic (\emph{i.e.}, $a_k/b_k\to1$ as $k\to \infty$).

\begin{proof}[Proof of the Main Theorem] Let us start by choosing some parameters. Let $(\delta_k)_{k=0}^{\infty} \subseteq\R$ be a decreasing sequence with  
\begin{equation} \label{sequence:delta-k}
	\delta_k \sim \frac{1}{\log k} \qquad \text{as } k\to\infty.
\end{equation}
Next, let $(\theta_k)_{k=0}^{\infty} \subseteq \R$ be another decreasing sequence such that 
\begin{equation} \label{sequence:theta-k}
    \theta_k \searrow 0 \ \ \ \mbox{and} \ \ \ \frac{1 + \delta_{k+1}}{1 + \delta_k} < 1 - 2 \theta_{k+1} \ \mbox{for every} \ k \geq 0.
\end{equation}
Additionally, fixed $\e>0$, the sequences are chosen so that
\begin{equation} \label{epsilon} 
    \frac{1+\theta_1}{1-\theta_1}\cdot (1+\delta_1)^2\leq 1+\e.
\end{equation}

Next, we fix some notation we will be using throughout the proof. For $A\in [\Gamma]^{<\omega}$, we identify $\ell_p(A)$ with the finite-dimensional subspace of $\ell_p(\Gamma)$ comprising all vectors with support contained in $A$. Moreover, when $x\in\ell_p(\Gamma)$, we define $Ax\in\ell_p(A)$ as
\begin{equation*}
    (Ax)(\gamma)\coloneqq 
    \begin{cases} x(\gamma) & \text{if}\ \gamma\in A \\
    0 & \text{if}\ \gamma\notin A.
    \end{cases}
\end{equation*}
In other words, we also denote by $A$ the canonical projection from $\ell_p(\Gamma)$ onto $\ell_p(A)$. Since $\ell_p(A)$ is finite-dimensional, $C^\infty$-smooth norms are dense in $\ell_p(A)$, hence there exists a $C^\infty$-smooth norm $\n_{s,A}$ on $\ell_p(A)$ that $\frac{1}{1 + \theta_{|A|}}$-approximates the $\n_p$-norm ($\n_{s,A}$ is also trivially LFC). In particular, for every $y\in\Y$ we have
\begin{equation} \label{first-inequality}
    \frac{1}{1 + \theta_{|A|}} \|Ay\|_p \leq \|Ay\|_{s,A} \leq \|Ay\|_p.
\end{equation}
Finally, in several computations it will be more convenient to use, instead of $\n_p$, the following auxiliary equivalent norm on $\Y$
\begin{equation*}
    \nu(x) \coloneqq \sup_{|A|<\infty} (1 + \delta_{|A|})^2 \|Ax\|_p \ \ \ (x \in \Y). 
\end{equation*}
The norm $\nu$ approximates the norm $\n_p$ on $\Y$; more precisely, 
\begin{equation} \label{second-inequality}
    \n_p \leq \nu \leq (1+\delta_1)^2 \n_p
\end{equation}	

We now come to the crucial part of the proof, where we prove the following `strong maximum' result.

\begin{claim} \label{Claim:jump-ngh} Let $x \in \Y$ be such that $\nu(x) \leq 1$. Then there exist an open neighbourhood $\mathcal{O}_x$ of $x$ and a finite collection of subsets $\mathfrak{F}_x \subseteq [\Gamma]^{<\omega}$ such that, for each $y \in \mathcal{O}_x$ and each $A \in [\Gamma]^{<\omega} \setminus \mathfrak{F}_x$, we have 
\begin{equation} \label{jump-ngh}
    (1+\delta_{|A|})^2 \|Ay\|_{s,A} \leq 1-\theta_{|A|}. 
\end{equation}
\end{claim}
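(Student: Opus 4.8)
The plan is to argue by contradiction: suppose no such neighbourhood $\mathcal{O}_x$ and finite family $\mathfrak{F}_x$ exist. Then one can find an infinite sequence of finite sets $A_1, A_2, \dots \in [\Gamma]^{<\omega}$ with $|A_n| \to \infty$ (after passing to a subsequence — finitely many sets of any fixed cardinality can be absorbed into $\mathfrak{F}_x$), together with points $y_n \to x$, such that $(1+\delta_{|A_n|})^2 \|A_n y_n\|_{s,A_n} > 1 - \theta_{|A_n|}$ for every $n$. Using \eqref{first-inequality} this gives $(1+\delta_{|A_n|})^2 \|A_n y_n\|_p \geq (1+\delta_{|A_n|})^2 \|A_n y_n\|_{s,A_n} > 1 - \theta_{|A_n|}$, so along this sequence the $\n_p$-mass of $y_n$ concentrated on $A_n$ stays bounded below by roughly $1/(1+\delta_{|A_n|})^2$. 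Since $y_n \to x$ in $\ell_p(\Gamma)$ and $|A_n|\to\infty$, and since $x\in\Y_p$ actually lies in some $\ell_q(\Gamma)$ with $q<p$, the tail of $x$ in the $\n_p$-norm is not just summable but decays fast; the point is to show that a growing finite set cannot capture almost all of the $\n_p$-mass of such an $x$ once we pay the multiplicative penalty $(1+\delta_{|A_n|})^2$, because $\delta_k\sim 1/\log k$ tends to $0$ too slowly relative to how concentrated an $\ell_q$-vector can be on a set of size $|A_n|$.

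The heart of the matter is therefore a quantitative estimate: for $x \in \ell_q(\Gamma)$ with $\|x\|_q$ fixed and $\nu(x)\le 1$ (so in particular $\|x\|_p \le 1$), and for a finite set $A$ with $|A| = m$ large, one must bound $\|Ax\|_p$ away from $1$ by an amount that beats $\theta_m$ and survives multiplication by $(1+\delta_m)^2$. I would extract such a bound from the interpolation/Hölder-type inequality relating $\n_p$ and $\n_q$ on an $m$-point set: if $y$ is supported on $m$ coordinates then $\|y\|_p \le \|y\|_q \le m^{1/q - 1/p}\|y\|_p$, but more usefully, if $\|x\|_q \le M$ then the part of $x$ of $\n_p$-norm close to its total must already use "many" coordinates, with a rate controlled by $M$ and the exponents. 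Combining this with the perturbation $y_n \to x$ (handled by a routine triangle-inequality splitting of $\|A_n y_n\|_p$ into $\|A_n x\|_p + \|A_n(y_n - x)\|_p \le \|A_n x\|_p + \|y_n - x\|_p$) should yield $(1+\delta_{|A_n|})^2\|A_n y_n\|_p \le 1 - 2\theta_{|A_n|} + o(1)$, contradicting the lower bound $1 - \theta_{|A_n|}$ for $n$ large; here the precise choice \eqref{sequence:theta-k} of $(\theta_k)$, which forces $(1+\delta_{k+1})^2/(1+\delta_k)^2$-type ratios to stay below $1 - 2\theta_{k+1}$, is exactly what makes the two $\theta$-terms separate.

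The main obstacle I anticipate is the combinatorial/quantitative estimate in the previous paragraph: controlling, uniformly in the (arbitrary, possibly uncountable) index set $\Gamma$ and in the finite set $A$, how much $\n_p$-mass of a norm-one vector lying in $\ell_q(\Gamma)$ can sit on $|A|$ coordinates, and matching the resulting decay rate against $\delta_{|A|}\sim 1/\log|A|$ and $\theta_{|A|}$. This is presumably where the "new combinatorial ingredients and finer estimates" mentioned before the claim come in — one likely needs to consider the largest coordinates of $x$, estimate the remaining $\n_p$-mass after removing the top $m$ of them via the $\ell_q$-bound (a sum of the form $\sum_{k>m} a_k^p$ where $(a_k)$ is the decreasing rearrangement, with $\sum a_k^q \le M^q$, giving $a_k \le M k^{-1/q}$ and hence a tail bound $\lesssim M^p m^{1 - p/q}$), and then verify that such polynomial-in-$m$ smallness indeed dominates the logarithmically small quantities $\theta_m$ and $\delta_m$. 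A subtlety to keep track of is that $x$ need not have finite support, so "$Ax$" for the offending sets $A_n$ need not be nested or related to the top coordinates of $x$; the argument must work for an arbitrary growing sequence of finite sets, which is why one passes to the decreasing rearrangement and argues with cardinalities only.
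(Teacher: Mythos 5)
There is a genuine gap, and it sits exactly at your opening reduction: the step ``after passing to a subsequence, $|A_n|\to\infty$, since finitely many sets of any fixed cardinality can be absorbed into $\mathfrak{F}_x$'' assumes precisely what has to be proved. Negating the claim only yields infinitely many distinct offending sets, and these may all have the \emph{same} cardinality $k$ (already for $k=2$: a priori there could be infinitely many pairs $\{\gamma,\gamma'\}$ each carrying $p$-mass close to $(1+\delta_2)^{-2p}$). Showing that for each fixed $k$ only finitely many $k$-sets can violate the bound is the hard combinatorial half of the paper's proof: one passes to a subfamily of constant cardinality, applies the Delta System Lemma to extract a sunflower with root $\Delta$, $|\Delta|\leq k-1$, uses $x\in\ell_p(\Gamma)$ and the disjointness of the petals to get $\|A_jx\|_p\to\|\Delta x\|_p$, and then contradicts $\nu(x)\leq 1$ applied to the strictly smaller set $\Delta$ via the inequality $\frac{1+\delta_k}{1+\delta_{k-1}}<1-2\theta_k$ from \eqref{sequence:theta-k}. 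Your tail estimate for the decreasing rearrangement cannot substitute for this, because the offending sets of bounded cardinality need not have anything to do with the largest coordinates of $x$. This is the ``combinatorial ingredient'' announced before the claim, and it is entirely absent from your proposal.

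For the regime $|A_n|\to\infty$ your quantitative idea (rearrangement $a_k\leq Mk^{-1/q}$, polynomial decay $k^{-p/q}$ beating $\delta_k\sim 1/\log k$) is essentially the computation in the paper's Claim \ref{Claim:phi-n-is-decreasing-eventually}, so that part is on the right track, but two points still need repair. First, the paper does not run a contradiction over a sequence $y_n\to x$: it first proves a bound for $x$ alone that is \emph{uniform} over all large sets, namely $(1+\delta_{|A|})^2\|Ax\|_p\leq 1-2\theta_{k_0+1}$ for every $|A|>k_0$ (obtained from the eventual monotonicity of $\phi_k(x)=(1+\delta_k)\sup_{|A|=k}\|Ax\|_p$ together with $\frac{1+\delta_{k_0+1}}{1+\delta_{k_0}}<1-2\theta_{k_0+1}$), and only then perturbs to $y$ with $\nu(y-x)<\theta_{k_0+1}$. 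Your conclusion ``$1-2\theta_{|A_n|}+o(1)$ contradicts $1-\theta_{|A_n|}$'' is not conclusive when $|A_n|\to\infty$, because the margin $\theta_{|A_n|}$ you must beat also tends to $0$; you need the gain over $1$ to be bounded below by a fixed positive quantity before a fixed-size perturbation of $x$ can be absorbed. Second, the perturbation should be measured in $\nu$ (so that $(1+\delta_{|A|})^2\|A(y-x)\|_p\leq\nu(y-x)$ uniformly in $A$), which is a minor but necessary adjustment to your triangle-inequality splitting.
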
		

\begin{proof}[Proof of Claim \ref{Claim:jump-ngh}] \renewcommand\qedsymbol{$\square$}
In fact, we prove a stronger estimate that only involves the point $x$. Indeed, we show that there exist a finite collection of subsets $\mathfrak{F}_x \subseteq [\Gamma]^{<\omega}$ and $k_0 \in \N$ such that, for every $A \in [\Gamma]^{<\omega} \setminus \mathfrak{F}_x$, we have 
\begin{equation} \label{jump}
    (1+\delta_{|A|})^2 \|Ax\|_p \leq 
    \begin{cases} \displaystyle
    1-2\theta_{|A|}, & \mbox{if}  \ |A|\leq k_0,\\  \displaystyle 1-2\theta_{k_0+1}, & \mbox{if} \ |A|>k_0.
    \end{cases}
\end{equation}

Let us show first that \eqref{jump} is indeed stronger than \eqref{jump-ngh}. Suppose that we have proved \eqref{jump}. Define the following open neighbourhood of $x$:
\begin{equation*}
    \mathcal{O}_x\coloneqq \Big\{y\in\Y\colon \nu(y-x)< \theta_{k_0+1} \Big\}.
\end{equation*}
Then, for every $A \in [\Gamma]^{<\omega} \setminus \mathfrak{F}_x$ and $y \in \mathcal{O}_x$, we have that
\begin{eqnarray*}
    (1 + \delta_{|A|})^2 \|Ay\|_{s,A} &\stackrel{\eqref{first-inequality}}{\leq}& ( 1 + \delta_{|A|})^2 \|Ay\|_p\\
    &\leq& (1 + \delta_{|A|})^2 \|Ax\|_p + (1 + \delta_{|A|})^2 \|A(y-x)\|_p \\
    &\leq& (1 + \delta_{|A|})^2 \|Ax\|_p + \nu(y - x)< (1 + \delta_{|A|})^2 \|Ax\|_p + \theta_{k_0+1}.	
\end{eqnarray*} 
If $|A|\leq k_0$, we use \eqref{jump} and continue from the above inequalities
\begin{equation*}
    (1+\delta_{|A|})^2 \|Ay\|_{s,A} \stackrel{\eqref{jump}}{\leq}  1 - 2 \theta_{|A|} + \theta_{k_0+1}\leq 1 - \theta_{|A|}. 
\end{equation*}
Similarly, if $|A| > k_0$, 
\begin{equation*}
    (1+\delta_{|A|})^2 \|Ay\|_{s,A}\leq 1-\theta_{k_0+1}\leq 1-\theta_{|A|}. 
\end{equation*}

Therefore to prove Claim \ref{Claim:jump-ngh} all we need to do is to prove \eqref{jump}. In order to do so, let us consider the sequence $(\phi_k(x))_{k=1}^{\infty}$ defined by 
\begin{equation*}
    \phi_k(x)\coloneqq (1+\delta_k) \sup_{|A|=k} \|Ax\|_p \qquad (k\in\N).
\end{equation*}
Note that the supremum above is actually attained. Indeed, if $(\gamma_j)_{j=1}^{\infty} \subseteq\Gamma$ is an injective sequence such that $\supp(x) \subseteq \{\gamma_j\}_{j=1}^{\infty}$ and $\big(|x(\gamma_j)| \big)_{j=1}^{\infty}$ is non-increasing, it is clear that
\begin{equation*}
    \phi_k(x)=(1+\delta_k)\cdot \left(\sum_{j=1}^k |x(\gamma_j)|^p \right)^{1/p} \qquad \text{for every}\ k\in\N.
\end{equation*}

\begin{claim} \label{Claim:phi-n-is-decreasing-eventually} There exists $k_0\in\N$ such that $\phi_{k+1}(x)\leq \phi_k(x)$ for every $k \geq k_0$. 
\end{claim}

\begin{proof}[Proof of Claim \ref{Claim:phi-n-is-decreasing-eventually}] \renewcommand\qedsymbol{$\square$}
Clearly, $\phi_{k+1}(x)\leq \phi_k(x)$ is equivalent to
\begin{equation*}
    (1+\delta_{k+1})^p\cdot \left(\sum_{j=1}^k |x(\gamma_j)|^p+ |x(\gamma_{k+1})|^p \right) \leq(1+ \delta_k)^p \cdot\sum_{j=1}^k |x(\gamma_j)|^p
\end{equation*}
which is in turn equivalent to
\begin{equation} \label{claim-inequality}
    |x(\gamma_{k+1})|^p\leq \frac{(1+\delta_k)^p - (1+\delta_{k+1})^p}{(1+\delta_{k+1})^p} \cdot \sum_{j=1}^k |x(\gamma_j)|^p.
\end{equation}
Thus, it is enough to check that \eqref{claim-inequality} is true for all large enough $k$. By using first-order Taylor expansions and \eqref{sequence:delta-k} we readily get
\begin{equation*}
    (1+\delta_k)^p - (1+\delta_{k+1})^p \sim\frac{p}{k (\log k)^2}.
\end{equation*}
As the sequence $\left(\sum_{j=1}^k |x(\gamma_j)|^p \right)_{n=1}^{\infty}$ is non-decreasing and bounded, for some constant $C > 0$ we thus have that 
\begin{equation} \label{phi-n-estimation1}
    \frac{(1+\delta_k)^p - (1+\delta_{k+1})^p}{ (1+\delta_{k+1})^p} \cdot\sum_{j=1}^k |x(\gamma_j)|^p \sim C \cdot \frac{1}{k(\log k)^2}.
\end{equation}

We now estimate the left-hand side of \eqref{claim-inequality}. Since $x \in \Y$, there exists $q < p$ such that $x \in \ell_q(\Gamma)$. By definition of the sequence $(\gamma_j)_{j=1}^\infty$, for every $k \in \N$, we have that\footnote{This is essentially the Chebyshev--Markov inequality.}
\begin{equation*}
    \|x\|_q^q \geq \sum_{j=1}^k |x(\gamma_j)|^q \geq k |x(\gamma_k)|^q.
\end{equation*}
In other words, there exists $\tilde{C} > 0$ such that 
\begin{equation} \label{phi-n-estimation2}
    |x(\gamma_{k+1})|^p \leq \tilde{C} \cdot \frac{1}{k^{p/q}}
\end{equation}
Comparing \eqref{phi-n-estimation1} and \eqref{phi-n-estimation2} and recalling that $\frac{p}{q}>1$, it is clear that \eqref{claim-inequality} is true for $k$ sufficiently large, which proves Claim \ref{Claim:phi-n-is-decreasing-eventually} as desired.
\end{proof} 

We now return to the proof of Claim \ref{Claim:jump-ngh}. Note first that $(1+\delta_k)\phi_k(x)\leq\nu(x)$, for every $k\in\N$. Let $k_0\in\N$ as in Claim \ref{Claim:phi-n-is-decreasing-eventually}. Then, for every set $A\in [\Gamma]^{<\omega}$ with $|A|>k_0$, we have 
\begin{eqnarray*}
    (1+\delta_{|A|})^2 \|Ax\|_p &\leq& (1 + \delta_{k_0+1})(1 + \delta_{|A|}) \|Ax\|_p \\
    &\leq& \frac{1 + \delta_{k_0+1}}{1 +\delta_{k_0}} \cdot (1 + \delta_{k_0}) \phi_{|A|}(x)\\
    &\leq& \frac{1 + \delta_{k_0+1}}{1 +\delta_{k_0}} \cdot (1 + \delta_{k_0}) \phi_{k_0}(x) \\
    &\leq& \frac{1 + \delta_{k_0+1}}{1 +\delta_{k_0}} \cdot \nu(x) \leq \frac{1 + \delta_{k_0+1}}{1 +\delta_{k_0}} \stackrel{\eqref{sequence:theta-k}}{<} 1 - 2 \theta_{k_0+1}. 
\end{eqnarray*} 

It remains to prove that there exists a finite subset $\mathfrak{F}_x$ of $[\Gamma]^{<\omega}$ such that, for each $A \in [\Gamma]^{<\omega} \setminus \mathfrak{F}_x$ with $|A| \leq k_0$, we have that $(1 + \delta_{|A|})^2 \|Ax\|_p \leq 1 - 2 \theta_{|A|}$. Suppose that this is not the case. Then, there exists a sequence of mutually distinct sets $(A_j)_{j=1}^{\infty} \subseteq \Gamma$ with $|A_j|\leq k_0$ for every $j\in\N$ and such that 
\begin{equation} \label{claim-inequality-1}
    (1 + \delta_{|A_j|})^2 \|A_j x\|_p > 1 - 2 \theta_{|A_j|}.
\end{equation}
Up to passing to a subsequence, we can assume that there is $k\in\N$ such that $|A_j|=k$ for every $j\in\N$. Hence, since all the sets $A_j$ have the same cardinality, we can apply the Delta System Lemma for countable families (see, for instance, \cite[p.~167]{K}). Therefore, there exist a subsequence of $(A_j)_{j=1}^{\infty}$, still denoted by $(A_j)_{j=1}^{\infty}$, and a set $\Delta \in [\Gamma]^{<\omega}$ (possibly empty) such that $A_i\cap A_j=\Delta$ for every $i\neq j$. Notice that $|\Delta| \leq k-1$ because the sets $A_j$ are mutually distinct. Now, since the elements of the sequence $(A_j \setminus \Delta)_{j=1}^{\infty}$ are disjoint and $x \in \ell_p(\Gamma)$, we have that
\begin{equation*}
    \|A_j x\|_p^p = \|\Delta x\|_p^p + \|(A_j\setminus \Delta)x\|_p^p \to \|\Delta x\|_p^p \qquad \text{as} \ j\to\infty.
\end{equation*} 
Thus, taking the limit when $j \to \infty$ in \eqref{claim-inequality-1}, we get that 
\begin{equation} \label{claim-inequality-2} 
    (1+\delta_k)^2 \|\Delta x\|_p \geq 1-2\theta_k 
\end{equation}
which will yield a contradiction. Indeed, since $(1+\delta_{|\Delta|})^2 \|\Delta(x)\|_p \leq\nu(x) \leq 1$ and $|\Delta| \leq k-1$, \eqref{claim-inequality-2} yields
\begin{equation*}
    1-2\theta_k\leq \frac{(1+\delta_k)^2}{(1+ \delta_{|\Delta|})^2} \cdot (1+\delta_{|\Delta|})^2 \|\Delta x\|_p\leq 
    \left( \frac{1 + \delta_k}{1 + \delta_{k-1}} \right)^2 \leq \frac{1 + \delta_k}{1 + \delta_{k-1}} \stackrel{\eqref{sequence:theta-k}}{<} 1 - 2 \theta_k
\end{equation*}
which is an absurd. This concludes the proof of Claim \ref{Claim:jump-ngh}. 
\end{proof}	
	
From this point on, we just have to glue together the ingredients in the standard way. Even if the argument is the same as in \cite[Theorem 4.1]{DHR}, we give the details for the sake of being self-contained.

For every $n\in \N_0$, let $\rho_n\colon \R\to [0,\infty)$ be a $C^{\infty}$-smooth, even, and convex function such that $\rho_n\equiv0$ on $[0,1- \theta_n^2]$ and $\rho_n(1)=1$. Then, for every $n\in\N_0$, we have that $\rho_n(t) \leq 1$ if and only if $|t|\leq1$. Define $\Psi\colon \Y\to [0,\infty]$ by
\begin{equation}\label{eq: def Psi}
    \Psi(x)\coloneqq\sum_{|A|<\infty} \rho_{|A|} \Big((1+\delta_{|A|})^2 \cdot (1+\theta_{|A|})\cdot \|Ax\|_{s,A} \Big) \qquad (x \in \Y).
\end{equation}
Let $x \in\Y$ with $\nu(x) \leq 1$ and take $\mathcal{O}_x$ and $\mathfrak{F}_x$ as in Claim \ref{Claim:jump-ngh}. If $y\in \mathcal{O}_x$ and $A \in [\Gamma]^{<\omega} \setminus \mathfrak{F}_x$, we have that 
\begin{equation*}
    (1 + \delta_{|A|})^2 (1 + \theta_{|A|}) \|Ay\|_{s,A} \stackrel{\eqref{jump-ngh}}{\leq} (1 - \theta_{|A|})(1 + \theta_{|A|}) = 1 - \theta_{|A|}^2
\end{equation*}
which implies that $\rho_{|A|} \Big((1+\delta_{|A|})^2 \cdot (1+\theta_{|A|})\cdot \|Ay\|_{s,A} \Big) = 0$. Hence, on the set $\mathcal{O}_x$, only the finitely many summands with $A\in \mathfrak{F}_x$ are different from $0$. Also, note that each summand in \eqref{eq: def Psi} is $C^\infty$-smooth on $\Y$ (each summand vanishes in a neighbourhood of $0$, so $\Psi$ is also differentiable there). Therefore, $\Psi$ is (real-valued and) $C^\infty$-smooth and LFC on the open set $\mathcal{O}$ defined by 
\begin{equation*}
    \mathcal{O}\coloneqq \bigcup \Big\{\mathcal{O}_x\colon x \in\Y, \ \nu(x)\leq1 \Big\}. 
\end{equation*}

Next, we note that the convex and symmetric set $\{\Psi<1\}$ is contained in $\{\nu\leq 1\} \subseteq \mathcal{O}$ (hence it is also open, as $\Psi$ is continuous on $\mathcal{O}$). Indeed, if $\Psi(x)<1$, then $\rho_{|A|} \Big((1+\delta_{|A|})^2 \cdot (1+\theta_{|A|})\cdot \|Ax\|_{s,A} \Big) \leq 1$ for every $A \in [\Gamma]^{<\omega}$. So the properties of the functions $\rho_n$ give 
\begin{equation*}
    (1 + \delta_{|A|})^2 \|Ax\|_p \stackrel{\eqref{first-inequality}}{\leq} (1 + \delta_{|A|})^2(1 + \theta_{|A|}) \|Ax\|_{s,A} \leq 1
\end{equation*}
and $\nu(x)\leq 1$. Moreover, the set $\{\Psi\leq 1-\theta_1\}$ is closed in $\Y$ by the lower semi-continuity of $\Psi$. Hence, a standard consequence of the Implicit Function Theorem (see \cite[Lemma 2.5]{DHR} or \cite[Chapter 5, Lemma 23]{HJ}) implies that the Minkowski functional $\nn\cdot$ of $\{\Psi\leq 1-\theta_1\}$ is an equivalent $C^\infty$-smooth and LFC norm on $\Y$.

It remains to check that $\nn\cdot$ approximates $\n_p$. For this aim, we first show that $\Psi(x)=0$ whenever $\nu(x)\leq \frac{1-\theta_1}{1+\theta_1}$. Indeed, if $x\in\Y$ satisfies $\nu(x)\leq \frac{1-\theta_1}{1+\theta_1}$, then for every $A \in [\Gamma]^{<\omega}$, we have that 
\begin{eqnarray*}
    (1 + \delta_{|A|})^2 (1 + \theta_{|A|}) \|Ax\|_{s,A} &\stackrel{\eqref{first-inequality}}{\leq}& (1 + \delta_{|A|})^2(1 + \theta_{|A|}) \|Ax\|_p \\
    &\leq& \frac{1 - \theta_1}{1 + \theta_1} \cdot (1 + \theta_{|A|}) \leq 1 - \theta_{|A|}^2.
\end{eqnarray*}
Hence $\Psi(x) = 0$. Combining this inclusion with the inclusion $\{\Psi\leq1\} \subseteq \{\nu\leq1\}$, which was proved above, we have that 
\begin{equation*}
    \nu\leq \nn\cdot\leq \frac{1+\theta_1}{1-\theta_1} \nu.
\end{equation*}
Together with \eqref{second-inequality} we finally reach the conclusion that 
\begin{equation*}
    \n_p\leq \nn\cdot\leq \frac{1+\theta_1}{1- \theta_1} \cdot (1+\delta_1)^2 \n_p \stackrel{\eqref{epsilon}}{\leq}(1+\e) \n_p.
\end{equation*}
\end{proof}

\section{Operator ranges in \texorpdfstring{$\ell_1(\Gamma)$}{l₁(Gamma)}}
In this short section, we discuss the problem of whether $\ell_p(\Gamma)$ (for $1\leq p<\infty$) contains a dense operator range with a $C^\infty$-smooth and LFC norm. As we mentioned already in the Introduction, operator ranges bear a certain form of completeness, that is not shared by all normed spaces (for example, recall the standard fact that there is no complete norm on a normed space of dimension less than continuum). For this reason, building a smooth norm on an operator range is more complicated than building one on a general normed space.

When $p>1$ we have observed before that $\Y_p$ contains a dense operator range, since there is a continuous linear injection of $\ell_1(\Gamma)$ in $\Y_p$. Therefore, $\ell_p(\Gamma)$ contains a dense operator range with a $C^\infty$-smooth and LFC norm (by the Main Theorem). When $p=1$ the situation is different and the result depends on the cardinality of $\Gamma$. If $\Gamma$ is countable, then $\Y_1$ still contains a dense operator range. In fact, it is sufficient to find a continuous linear injection of $\ell_\infty$ into $\Y_1$, which is simply given by the map $T\colon \ell_\infty \to \ell_{1/2}$ defined by
\begin{equation*}
    (x(j))_{j=1}^\infty \mapsto \left(2^{-j}\cdot x(j)\right)_{j=1}^\infty.
\end{equation*}
Hence, $\ell_1$ also contains a dense operator range with a $C^\infty$-smooth and LFC norm. This ceases to be true for uncountable $\Gamma$, as the next result shows. It is essentially due to Rosenthal \cite{Rosenthal} and, in a slightly weaker form, it can also be found in \cite[Lemma 3.8]{HK}. Yet, a direct proof is so short that we give it here for the sake of completeness.

\begin{proposition} \label{Proposition:l1-omega1} Let $\Y$ be a non-separable operator range in $\ell_1(\Gamma)$. Then $\Y$ contains an isomorphic copy of $\ell_1(\omega_1)$; in particular, $\Y$ admits no Fr\'echet smooth norm.
\end{proposition}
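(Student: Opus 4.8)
The plan is to exploit the fact that a non-separable operator range $\mathcal{Y}$ in $\ell_1(\Gamma)$ is the image $T(\mathcal{Z})$ of some Banach space $\mathcal{Z}$ under a bounded operator $T$, and then to use a classical weak-compactness argument due to Rosenthal to extract an uncountable "almost disjointly supported" family of unit vectors in $\mathcal{Y}$. Since $\ell_1(\Gamma)$ does not contain a reflexive infinite-dimensional subspace (indeed $\ell_1$ has the Schur property), no infinite-dimensional subspace of $\ell_1(\Gamma)$ is reflexive, so $T$ cannot be weakly compact when restricted to any infinite-dimensional subspace along which $\mathcal{Y}$ fails to be separable; more precisely, I would argue that $T$ is not weakly compact. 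By a theorem of Rosenthal on operators from a Banach space into $\ell_1(\Gamma)$, either $T$ is weakly compact (hence has separable range, contradicting non-separability since weakly compact operators into $\ell_1$ have separable range by Schur), or $T$ "fixes a copy of $\ell_1(\omega_1)$"—that is, there is a subspace of $\mathcal{Z}$ isomorphic to $\ell_1(\omega_1)$ on which $T$ is an isomorphism onto its image. In the latter case $\mathcal{Y} \supseteq T(\ell_1(\omega_1)$-copy$) \cong \ell_1(\omega_1)$, which is exactly the conclusion.

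The concrete mechanism behind Rosenthal's dichotomy that I would spell out: pick, for each countable ordinal, a point of $\mathcal{Y}$ not in the closed span of the previously chosen ones (possible by non-separability), normalize, and pull back to norm-bounded preimages $z_\alpha \in \mathcal{Z}$. The key combinatorial step is that an uncountable bounded subset of $\ell_1(\Gamma)$ always contains an uncountable subfamily $(y_\alpha)$ together with a single countable set $S \subseteq \Gamma$ such that the "tails" $y_\alpha - S y_\alpha$ (the parts supported off $S$) are uniformly bounded below in norm and are "almost disjointly supported": for any $\e > 0$ one can thin out to get the supports essentially disjoint up to mass $\e$. This is a standard pigeonhole-plus-$\Delta$-system type argument (the uncountably many vectors each put all but $\e$ of their mass on a countable set; uncountably many of these countable sets form a $\Delta$-system with countable root, which we absorb into $S$). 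An almost disjointly supported bounded-below family in $\ell_1(\Gamma)$ spans an isomorphic copy of $\ell_1(\omega_1)$ by the usual small-perturbation argument. Then one checks that $T$ is bounded below on the corresponding $z_\alpha$'s, again by a perturbation/gliding-hump argument, so $T$ restricted there is an isomorphism and $\mathcal{Y}$ contains $\ell_1(\omega_1)$.

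For the final clause, once $\mathcal{Y}$ contains an isomorphic copy of $\ell_1(\omega_1)$ it cannot admit a Fréchet-smooth (equivalently, $C^1$-smooth) norm: a space with a Fréchet-smooth norm is an Asplund space when complete, but more to the point, $\ell_1(\omega_1)$—and hence any superspace—fails to have even a Gâteaux-smooth norm, since $\ell_1(\omega_1)$ is not weakly Lindelöf determined / has no strictly convex dual norm in the required sense; the cleanest citation here is that $\ell_1(\omega_1)$ admits no Fréchet-smooth norm because its dual $\ell_\infty(\omega_1)$ is non-separable in a way incompatible with the separable-complemented structure forced by smooth renormings, or simply that a Banach space with Fréchet-smooth norm cannot contain $\ell_1(\omega_1)$ (see \cite{DGZ, HJ}). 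I would invoke the standard fact that a subspace of a space with a $C^1$-smooth norm again has one, and then quote that $\ell_1(\omega_1)$ has none.

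The main obstacle I anticipate is making Rosenthal's dichotomy fully rigorous in the "operator range" formulation rather than just the "subspace" formulation—one must carry the argument on the preimages $z_\alpha$ in $\mathcal{Z}$ and verify that $T$ is simultaneously bounded below there, which requires choosing the almost-disjoint supports and the gliding humps coherently. Since the excerpt explicitly allows citing Rosenthal \cite{Rosenthal} and the weaker \cite[Lemma 3.8]{HK}, in the actual write-up I would likely short-circuit most of this by quoting the dichotomy and giving only the short deduction, reserving the self-contained combinatorial argument as a remark.
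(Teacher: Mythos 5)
Your strategy is essentially the paper's, but you re-prove more than you need to, and two of your intermediate assertions are false as written and must be repaired. The paper's route is: apply the known theorem (K\"othe/Rosenthal) that every non-separable \emph{closed} subspace of $\ell_1(\Gamma)$ contains an isomorphic copy of $\ell_1(\omega_1)$ to $\overline{\Y}$; pick $(y_\alpha)_{\alpha<\omega_1}$ equivalent to the canonical basis; approximate each $y_\alpha$ within $\e$ by $Tz_\alpha\in\Y$, so that by a standard small-perturbation lemma $(Tz_\alpha)_{\alpha<\omega_1}$ retains a lower $\ell_1(\omega_1)$-estimate; pass to an uncountable subfamily on which $(z_\alpha)$ is norm-bounded (pigeonhole); and conclude that $Tz_\alpha\mapsto z_\alpha$ is bounded, so $T$ is an isomorphic embedding on $\overline{\rm span}\{z_\alpha\}_{\alpha<\omega_1}$ and its image is a copy of $\ell_1(\omega_1)$ inside $\Y$. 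Your second paragraph instead redoes the combinatorial core ($\Delta$-system plus disjointification) by hand inside $\Y$; this is a legitimate alternative and has the minor advantage of producing the vectors directly in $\Y$ rather than in $\overline{\Y}$, but it gains nothing over citing the subspace theorem. Note also that the ``gliding hump'' you worry about for the bounded-below step is unnecessary: once $(Tz_\alpha)$ has a lower $\ell_1(\omega_1)$-estimate and $(z_\alpha)$ is bounded, the inverse map on the span is automatically bounded, with no coherence between supports required.

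The two statements that need fixing. First, the dichotomy you attribute to Rosenthal --- an operator into $\ell_1(\Gamma)$ is either weakly compact or fixes a copy of $\ell_1(\omega_1)$ --- is false: the inclusion of $\ell_1$ onto a countable block of coordinates of $\ell_1(\Gamma)$ is not weakly compact, yet has separable range and fixes no copy of $\ell_1(\omega_1)$. The correct hypothesis is non-separability of the range, which is precisely the statement being proved, so quoting it in that form would be circular; you must either cite the subspace version or run your hands-on argument. Second, your claim that \emph{every} uncountable bounded subset of $\ell_1(\Gamma)$ contains an uncountable subfamily whose tails off a single countable set are uniformly bounded below is false (take uncountably many scalar multiples of one basis vector). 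What is true, and what your construction actually needs, is that non-separability of $\overline{\Y}$ forces, for each countable $S\subseteq\Gamma$ and each $\e\in(0,1)$, the existence of a unit vector of $\Y$ with at least $\e$ of its mass off $S$ (otherwise the projection onto $\ell_1(S)$ would be an isomorphism on $\overline{\Y}$, making it separable); a transfinite selection plus a pigeonhole thinning then yields the uniform lower bound before you apply the $\Delta$-system argument. The final clause is handled correctly in your last sentence (smooth norms restrict to subspaces, and $\ell_1(\omega_1)$ admits no G\^ateaux smooth norm), though several of the auxiliary justifications you list for it (Asplundness of complete spaces, dual separability) are not the operative facts here.
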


\begin{proof} Let $\Z$ be a Banach space and $T\colon \Z\to \ell_1(\Gamma)$ be a bounded linear operator such that $\Y=T[\Z]$. Then $\overline{\Y}$ is a non-separable subspace of $\ell_1(\Gamma)$, hence it contains an isomorphic copy of $\ell_1(\omega_1)$, \cite[(5) on p.185]{Kothe}. Let $(y_\alpha)_{\alpha<\omega_1}$ be equivalent to the canonical basis of $\ell_1(\omega_1)$, fix $\e>0$, and take $z_\alpha\in \Z$ with $\|y_\alpha - Tz_\alpha\|<\e$. If $\e>0$ is sufficiently small, the sequence $(Tz_\alpha)_{\alpha<\omega_1}$ is also equivalent to the canonical basis of $\ell_1(\omega_1)$ (see, \emph{e.g.}, \cite[Lemma 5.2]{HKR}, or \cite[Example 30.12]{Jameson}). Moreover, up to passing to an uncountable subset of $\omega_1$ and relabelling, we can assume that $(z_\alpha)_{\alpha<\omega_1}$ is a bounded set. Consequently, the linear map $S\colon {\rm span}\{Tz_\alpha\} _{\alpha<\omega_1} \to {\rm span}\{z_\alpha\} _{\alpha<\omega_1}$ defined by $Tz_\alpha \mapsto z_\alpha$ ($\alpha<\omega_1$) is bounded. So $T\cut_{\overline{\rm span} \{z_\alpha\} _{\alpha<\omega_1}}$ is an isomorphic embedding, and we are done.
\end{proof}

\subsection*{Acknowledgements} The third-named author is indebted to Bence Horv\'ath for several conversations concerning \cite{Rosenthal}. Besides, the authors would like to thank Rub\'en Medina for fruitful discussions on previous versions of the paper. During the preparation of this manuscript, the first-named author was visiting the Department of Mathematics of the Faculty of Electrical Engineering of the Czech Technical University in Prague and he is grateful for all the hospitality he had received there.


\end{document}